\def\qed{\hfill {\hbox{${\vcenter{\vbox{               
   \hrule height 0.4pt\hbox{\vrule width 0.4pt height 6pt
   \kern5pt\vrule width 0.4pt}\hrule height 0.4pt}}}$}}}
\def\utr{\underline{\triangleright}}
\def\otr{\overline{\triangleright}}
\newtheorem{theorem}{Theorem}
\newtheorem{proposition}[theorem]{Proposition}
\newtheorem{corollary}[theorem]{Corollary}
\theoremstyle{definition}
\newtheorem{example}{Example}
\newtheorem{definition}{Definition}
\date{}
\title{\Large \textbf{Categorification of Biquandle Arrow Weight Invariants via Quivers}}
\author{Sam Nelson\footnote{Email: Sam.Nelson@cmc.edu. Partially supported by Simons Foundation collaboration grant 702597.}\and
Migiwa Sakurai\footnote{Email: migiwa@shibaura-it.ac.jp}}
\begin{document}
\maketitle

\begin{abstract}
Introduced in \cite{NS}, biquandle arrow weights invariants are enhancements 
of the biquandle counting invariant for oriented virtual and classical
knots defined from biquandle-colored Gauss diagrams using a tensor over an 
abelian group satisfying certain properties.
In this paper we  categorify the biquandle arrow weight polynomial invariant 
using biquandle coloring quivers, obtaining new infinite families of 
polynomial invariants of oriented virtual and classical knots. 
\end{abstract}

\parbox{6in} {\textsc{Keywords:} Biquandles, homsets, enhancements, 
categorification, virtual knots, Gauss diagrams, biquandle arrow weights, 
quivers

\smallskip

\textsc{2020 MSC:} 57K12}

\section{\large\textbf{Introduction}}\label{I}

\textit{Biquandles} are algebraic structures encoding the Reidemeister
moves in knot theory. Associated to every oriented virtual
knot $K$ is a \textit{fundamental biquandle} $\mathcal{B}(K)$ which is 
conjectured to be a complete invariant up to a type of reversed mirror 
image \cite{EN}. Given a finite biquandle $X$, the set of biquandle 
homomorphisms from $\mathcal{B}(K)$ to $X$, $\mathrm{Hom}(\mathcal{B}(K),X)$,
is an invariant of classical and virtual knots known as the \textit{biquandle
homset invariant}. Elements of the homset can be represented as virtual
knot diagrams with elements of $X$ assigned to the semiarcs or as Gauss
diagrams with elements of $X$ assigned to the segments of the circle between
arrows satisfying the biquandle coloring condition; these $X$-colored Gauss
diagrams are analogous to matrices representing linear transformations with 
respect to a choice of basis.

In \cite{NS} the authors introduced an infinite family of enhancements of 
the biquandle homset invariant via structures called \textit{biquandle arrow 
weights} which are analogous to biquandle cocycles arising not from crossings 
in a virtual knot diagram but from arrow crossings in $X$-colored Gauss 
diagrams.

In \cite{CN} the first author introduced a categorification of the quandle
homset invariant known as the \textit{quandle colorings quiver} associated 
to each subset of the set of endomorphisms of the coloring quandle $X$. In 
\cite{JCSN2,CN2,IN} this idea was applied to categorify other homset 
enhancements including
the quandle cocycle enhancement, the quandle module enhancement, the psyquandle
homset invariant and more. From these categorifications, new polynomial 
invariants were defined via decategorification including two-variable 
polynomial invariants and enhancements of the in-degree polynomial 
decategorification.

In this paper we apply the same idea to categorify the biquandle arrow weight
invariant, in the process obtaining new polynomial invariants of virtual knots
via decategorification. The paper is organized as follows. In Section \ref{R}
we briefly review the basics of biquandles, Gauss diagrams and arrow weights. 
In Section \ref{BAWQ} we introduce biquandle arrow weight quivers and obtain
some new polynomial invariants of classical and virtual knots via
decategorification. We observe that this is an infinite family of invariants
with several choices of parameters available including target biquandle $X$,
abelian group $A$, biquandle arrow wight $W$ and subset $S$ of the
the set of endomorphsims of $X$. In Section \ref{E} we collect some examples 
to illustrate the computation of the new invariants and make tables of the 
invariant values for a sampling of virtual knots for some choices of 
parameters $(X,A,W,S)$. We conclude in Section \ref{Q} with some questions
for future research.

This paper, including all text, illustrations, and python code for 
computations, was written strictly by the authors without the use of 
generative AI in any form.

\section{\large\textbf{Review of Biquandles, Gauss Diagrams and Arrow Weights}}\label{R}

In order to introduce our new categorification of the Biquandle Arrow Weight 
invariant, it will be useful to briefly recall the orginal invariant. More 
detail can be found in our earlier paper \cite{NS}.

%
%

A \textit{biquandle} is a set $X$ with two binary operations $\utr,\otr$
satisying certain axioms (see \cite{EN} or \cite{NS}) 
derived from the Reidemeister moves. More precisely,
a \textit{biquandle coloring} or an \textit{$X$-coloring} of an oriented knot 
or link diagram $D$ by a biquandle $X$ assigns an element of 
an element of $X$ to each semiarc in $D$ so that at every crossing 
we have the condition
\[\scalebox{0.3}{\includegraphics{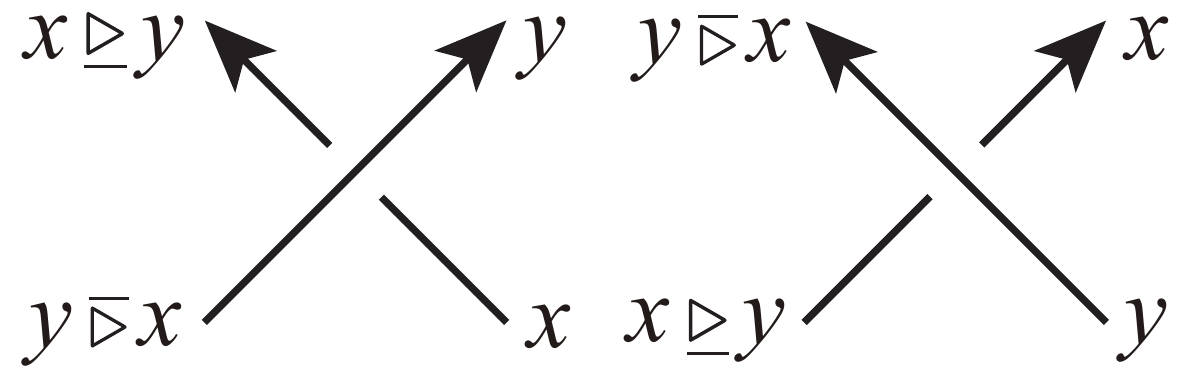}},\]
called the \textit{biquandle coloring condition}.

The biquandle axioms are then the conditions required such that given any
biquandle coloring of a diagram before a Reidemeister move, there exists 
a unique coloring of the diagram after the move which agrees with the 
original coloring outside the neighborhood of the move. Then, it follows 
that the number of colorings of an oriented knot or link diagram by a 
finite biquandle $X$ is a (possibly zero) integer-valued invariant, 
denoted $\Phi_X^{\mathbb{Z}}$, called the \textit{biquandle counting invariant}.

In particular, associated to every oriented classical or virtual knot $K$ 
there is a \textit{fundamental biquandle} $\mathcal{B}(K)$ defined 
from any diagram of $L$ with a presentation given by generators 
associated to semiarcs and relations at the crossings. An $X$-coloring 
then determines and is determined by a unique \textit{biquandle homomorphism}
$f:\mathcal{B}(K)\to X$. Then the sets $\mathrm{Hom}(\mathcal{B}(K),X)$
of biquandle homomorphisms (known as the \textit{biquandle homset}) is an 
invariant of $K$. More precisely, choosing a different diagram for $K$ changes 
the way each homset element is represented, with $X$-colored diagrams 
representing the same homset element iff they are related by $X$-colored
Reidemeister moves. Choosing a diagram for $K$ is analogous to choosing input 
and output bases for vector spaces in linear algebra, with $X$-colored diagrams
representing homset elements analogously to matrices representing linear 
transformations with Reidemeister moves playing the role of change of basis 
matrices.

Let $K$ be a virtual knot and $D$ a virtual knot diagram of an oriented
virtual knot $K$.  
Then, $D$ may be regarded as the image $f(\mathbb{S}^1)$ of a generic immersion 
$f : \mathbb{S}^1$ $\to$ $\mathbb{R}^2$.  
Recall that
a \textit{Gauss diagram} for $D$ is 
the preimage of $D$ with chords, each of which connects the preimages of 
each real crossing. 
We can
specify crossing information of each real crossing on the corresponding 
chord by directing the chord toward the undercrossing point and assigning 
each chord with the sign (i.e., the local writhe) of the crossing as shown: 
\[\includegraphics[width=4.5cm,clip]{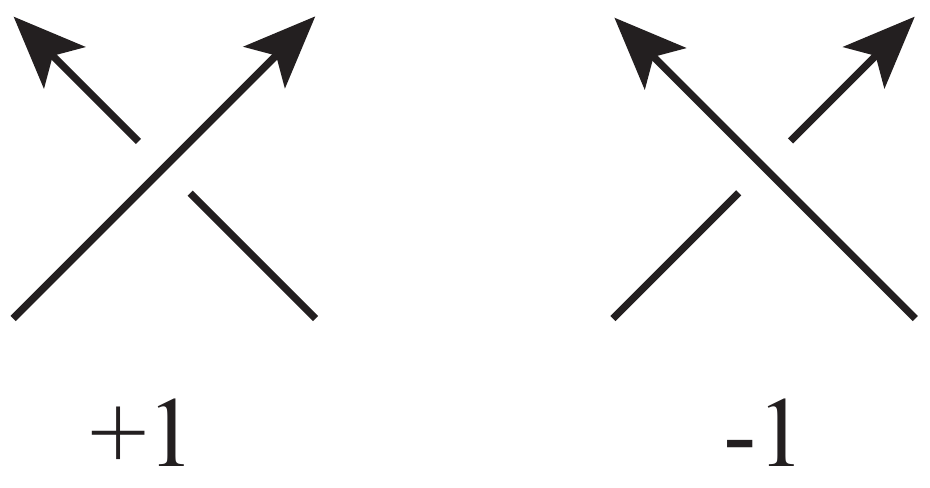}\]


A \textit{biquandle coloring} of a Gauss diagram by a biquandle $X$ is then an 
assignment of an element of $X$ to each segment of the circle between the 
arrowheads and tails such that at every arrow we have
as shown:
\[\scalebox{0.6}{\includegraphics{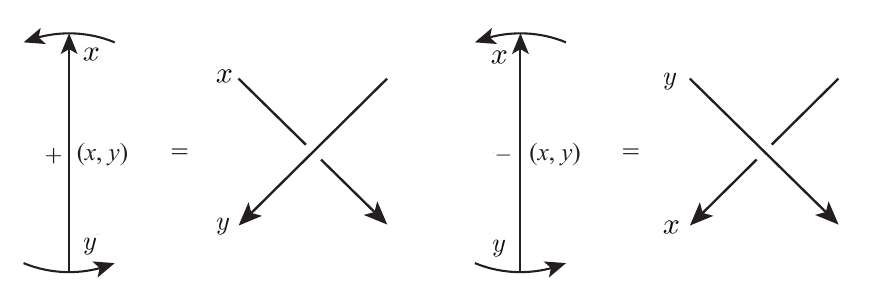}}\]
In particular, to each arrow in a biquandle-colored Gauss diagram
we associate an ordered pair $(x,y)$ of biquandle elements together with a 
sign. The set of biquandle colorings of a Gauss diagram
$D$ representing a virtual knot $K$ by a biquandle $X$ can be identified 
with the homset $\mathrm{Hom}(\mathcal{B}(K),X)$ with each $X$-coloring 
representing a homset element and with a choice of diagram $D$ being 
analogous to a choice of basis for a vector space.

%

Finally, recall from \cite{NS} that a \textit{biquandle arrow weight} is a 
function $\phi$ sending pairs of pairs of biquandle elements to elements in
an abelian group $A$ satisfying axioms which can be found in \cite{NS}.
The biquandle arrow weight axioms are chosen so that the sum $\Sigma_D$ of 
$\phi((a,b),(c,d))$ values over the set of arrow crossings with arrows 
labeled $(a,b)$ and $(c,d)$ and sign given by the product of arrow
signs over the set of arrows in an $X$-colored Gauss diagram 
\[\includegraphics{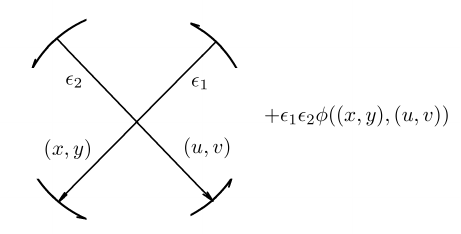}\]
is unchanged by Gauss diagram Reidemeister moves.

Given a finite biquandle $X$ and biquandle arrow weight $W$ with values in 
an abelian group $A$, for any oriented classical or virtual knot $K$ the
multiset of  values over the set of $X$-colorings of
any Gauss diagram $D$ representing $K$ is an invariant of knots, known
as the \textit{biquandle arrow weight multiset}, denoted $\Phi_X^{W,M}(K)$. 
We can convert this multiset into a ``polynomial'' form by summing over 
the multiset a formal variable raised to the ``power'' of each multiset 
element, obtaining the \textit{biquandle arrow weight polynomial} 
$\Phi_X^{W}(K)$. We note that depending on $A$, the ``polynomial'' may be
regarded as a bookkeeping device rather than a true polynomial.

We then have:

\begin{theorem} (\cite{NS})
The biquandle arrow weight multiset and polynomial are invariants of classical
and virtual knots.
\end{theorem}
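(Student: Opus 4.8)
The plan is to show that both the biquandle arrow weight multiset $\Phi_X^{W,M}(K)$ and the polynomial $\Phi_X^{W}(K)$ are independent of the choice of Gauss diagram representing $K$. Since any two Gauss diagrams of the same oriented virtual knot are related by a finite sequence of Gauss diagram Reidemeister moves (the virtual analogue of the classical moves, acting on chords/arrows), it suffices to verify invariance under each of these moves. The key observation to set up is that an $X$-coloring of a Gauss diagram is really a representative of a homset element $f \in \mathrm{Hom}(\mathcal{B}(K),X)$, and the biquandle coloring condition guarantees that each Reidemeister move induces a canonical bijection between the set of $X$-colorings before the move and the set after, agreeing outside the local neighborhood. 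So first I would fix this bijection on the level of colorings, and then show that for each coloring, the arrow weight sum $\Sigma_D$ computed before the move equals $\Sigma_D$ computed after.

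Next I would carry out the move-by-move verification. For each Gauss diagram Reidemeister move, the arrows outside the neighborhood of the move are untouched, so their contributions to $\Sigma_D$ cancel on both sides; the content of the argument is entirely local. For the Reidemeister I-type move, a single arrow is created or destroyed, contributing a kink whose arrow-weight value (a term of the form $\phi((a,b),(a,b))$ or a self-crossing term, with the appropriate sign) must vanish or cancel; this is exactly what the relevant biquandle arrow weight axiom from \cite{NS} is designed to enforce. For the Reidemeister II-type move, a pair of arrows with opposite signs is created or destroyed; here the two new arrow-crossing terms appear with opposite signs and equal $\phi$-values, so they cancel, again by an axiom of $W$. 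For the Reidemeister III-type move, the three arrows are preserved but the way they cross one another is rearranged; the colors on the affected segments change according to the biquandle operations $\utr,\otr$, and the invariance reduces to the cocycle-like identity among the $\phi$-values that constitutes the main biquandle arrow weight axiom. In each case I would simply invoke the corresponding axiom, since the whole point of those axioms, as stated in the excerpt, is to force $\Sigma_D$ to be unchanged by Gauss diagram Reidemeister moves.

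Having established that $\Sigma_D(f)$ depends only on the homset element $f$ and not on the diagram $D$, invariance of the multiset $\Phi_X^{W,M}(K)$ is immediate: the canonical bijection on colorings identifies the multiset of values for one diagram with that for another. The polynomial $\Phi_X^{W}(K)$ is then obtained from the multiset by a fixed formal procedure (summing the formal variable raised to each multiset element), so it is an invariant as a formal consequence of the multiset's invariance; no further work is required for this step.

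The main obstacle I expect is bookkeeping the local color assignments and arrow signs correctly across the Reidemeister III-type move and confirming that the resulting identity among $\phi$-values is precisely the stated arrow weight axiom rather than some variant. In particular, virtual knots admit several inequivalent orientations and sign conventions on the arrows, and the three-arrow move can appear in several oriented versions; I would need to either reduce all versions to a single canonical one using the already-verified Reidemeister I and II invariance, or check that the axioms of \cite{NS} were formulated to cover all necessary cases directly. Since this theorem is quoted from \cite{NS}, the cleanest approach is to note that the axioms defining a biquandle arrow weight were constructed exactly so that this move-by-move verification goes through, and to cite \cite{NS} for the detailed case analysis.
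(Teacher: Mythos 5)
Your proposal is correct and takes essentially the same approach as the paper: the paper presents this theorem as a quotation of \cite{NS}, with its justification being exactly the observation you make, namely that the biquandle arrow weight axioms are chosen precisely so that $\Sigma_D$ is unchanged by Gauss diagram Reidemeister moves, so that under the move-induced bijection of $X$-colorings the multiset (and hence its formal polynomial repackaging) is invariant. Your move-by-move outline, with the final deferral to \cite{NS} for the detailed case analysis, matches the paper's treatment.
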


\begin{example}\label{Ex1}
Let $X$ be the biquandle specified by the operation tables
\[
\begin{array}{r|rrr}
\utr & 1 & 2 \\ \hline
1 & 2 & 2 \\
2 & 1 & 1
\end{array}
\quad
\begin{array}{r|rrr}
\otr & 1 & 2 \\ \hline
1 & 2 & 2 \\
2 & 1 & 1
\end{array} 
\]
and let $A=\mathbb{Z}_{16}$. Then we compute via \texttt{python} that we have
biquandle arrow weight given by
\[
\left[\begin{array}{cc}
\left[\begin{array}{cc}0 & 4 \\ 4 & 0 \end{array}\right] &
\left[\begin{array}{cc}4 & 0 \\ 8 & 12\end{array}\right] \\
& \\
\left[\begin{array}{cc}4 & 8 \\ 0 & 12\end{array}\right] &
\left[\begin{array}{cc}0 & 12 \\ 12 & 0\end{array}\right]
\end{array}\right]
\]
among others. The virtual trefoil knot $2.1$ then has two $X$-colorings
\[\includegraphics{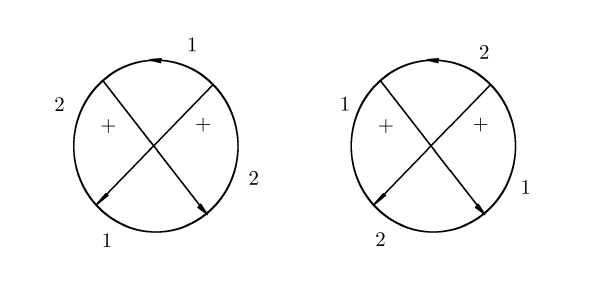}\]
yielding biquandle arrow weight multiset $\Phi_X^{W,M}(2.1)=\{8,8\}$ and
polynomial $2u^8$.
\end{example}

\section{\Large\textbf{Biquandle Arrow Weight Quivers}}\label{BAWQ}

We can now introduce our infinite family of categorfications of the biquandle
arrow weight invariants. 

Let $K$ be an oriented virtual knot represented by a Gauss diagram $D$
(see \cite{EN,GPV}).
Let $X$ be a finite biquandle, $A$ an abelian group and $\beta$
a biquandle arrow weight for $X$ with coefficients in $A$. 

Recall that a map $\phi:X\to X$ is a \textit{biquandle endomorphism} if
for all $x,x'\in X$ we have
\[\phi(x\, \utr\, x')=\phi(x)\, \utr\, \phi(x') \ \mathrm{and} \ 
\phi(x\, \otr\,  x')=\phi(x)\, \otr\, \phi(x').\]
Given an $X$-coloring of $D$, if we apply $\phi$ to all of the semiarc colors
the result is also an $X$-coloring of $D$.
\[\includegraphics{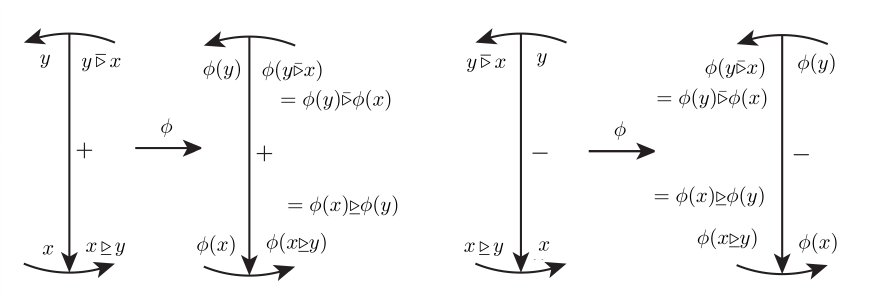}\]

It then follows, as in \cite{CN}, that the directed graph or \textit{quiver}
with a vertex for each $X$-coloring of $D$ and directed edge connecting
each coloring to its image under each $\phi\in S$ is unchanged by 
Reidemeister moves and hence defines an invariant of virtual knots, which 
we call the \textit{biquandle coloring quiver} $BCB_{X,S}(K)$. When 
$S=\mathrm{Hom}(X,X)$ is the complete set of endomorphisms, we say that
$BCB_{X,S}(K)$ is the \textit{full quiver}.

Since the vertices in the quiver represent the elements of the biquandle
homset, given a biquandle arrow weight $W$ we can associate to each
vertex in the quiver the corresponding $\Sigma_D$ value to obtain a 
\textit{weighted quiver} $\mathcal{Q}_{X,W,S}(K)$, i.e. a quiver weighted
with elements of $A$ at the vertices. We then have our main result:

\begin{proposition}
The weighted quiver $\mathcal{Q}_{X,W,S}(K)$ is an invariant of oriented 
classical and virtual knots.
\end{proposition}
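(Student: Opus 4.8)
The plan is to show that $\mathcal{Q}_{X,W,S}(K)$ is well-defined up to isomorphism of weighted quivers, independent of the choice of Gauss diagram $D$ representing $K$. The strategy decomposes the claim into two pieces that have already been established separately in the excerpt, and then checks that these two pieces are compatible. First, the underlying directed graph $BCB_{X,S}(K)$ is an invariant: this is asserted just above the statement, following the argument of \cite{CN}, because each endomorphism $\phi \in S$ commutes with the passage between $X$-colorings induced by a Reidemeister move, so the edge structure is preserved under any diagram change. Second, the vertex weights $\Sigma_D$ are invariant: by the cited Theorem of \cite{NS}, the value $\Sigma_D$ attached to a single $X$-coloring is unchanged under Gauss diagram Reidemeister moves. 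The work is to splice these together correctly.

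\medskip

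First I would fix two Gauss diagrams $D$ and $D'$ representing the same knot $K$, related by a finite sequence of Gauss diagram Reidemeister moves, and recall that each such move induces a canonical bijection $\psi$ between the $X$-colorings of $D$ and those of $D'$ (this bijection is precisely the content of the biquandle axioms, which guarantee a unique extension of any coloring across the neighborhood of the move). This $\psi$ is the candidate isomorphism of vertex sets. The two things left to verify are that $\psi$ respects the directed edges and that it respects the vertex weights.

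\medskip

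For the edges, I would invoke the naturality already used to establish that $BCB_{X,S}(K)$ is an invariant: for each $\phi \in S$, applying $\phi$ to the semiarc colors and then performing the Reidemeister move yields the same $X$-coloring of $D'$ as performing the move and then applying $\phi$, i.e. $\psi \circ \phi = \phi \circ \psi$ on colorings. Hence $\psi$ sends the $\phi$-edge out of each vertex to the $\phi$-edge out of its image, so $\psi$ is a graph isomorphism. For the weights, I would argue that for each $X$-coloring $c$ of $D$ the value $\Sigma_D(c)$ equals $\Sigma_{D'}(\psi(c))$. This is the key compatibility step, and it follows from the proof of the cited Theorem of \cite{NS}: that invariance statement is established coloring-by-coloring, showing that the arrow-weight sum of a given coloring is preserved under each Reidemeister move when the coloring is carried along by $\psi$. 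I would cite this explicitly rather than reprove it. Combining the edge and weight compatibilities shows that $\psi$ is an isomorphism of weighted quivers $\mathcal{Q}_{X,W,S}(D) \cong \mathcal{Q}_{X,W,S}(D')$.

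\medskip

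The main obstacle I anticipate is ensuring that the \emph{same} bijection $\psi$ simultaneously witnesses both the graph isomorphism and the weight equality — that is, that the weight invariance of \cite{NS} is proved with respect to exactly the coloring correspondence used in the quiver invariance of \cite{CN}, rather than some other identification of homsets. Both sources use the canonical coloring bijection induced by the biquandle axioms, so they agree, but I would state this matching carefully to avoid a gap. A minor secondary point is that $S$ must be a fixed subset of $\mathrm{Hom}(X,X)$ independent of the diagram; since $X$ is fixed and $S \subseteq \mathrm{Hom}(X,X)$ is chosen once, this causes no difficulty.
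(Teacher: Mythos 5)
Your proposal is correct and takes essentially the same approach as the paper, whose entire proof is the one-line observation that ``all of the ingredients are already invariant'' under Reidemeister moves --- i.e.\ exactly your decomposition into invariance of the underlying quiver $BCB_{X,S}(K)$ and invariance of the per-coloring weights $\Sigma_D$. Your explicit check that the \emph{same} coloring bijection $\psi$ witnesses both the edge structure and the weight equality is a detail the paper leaves implicit, but it is the right justification, not a different route.
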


\begin{proof}
All of the ingredients are already invariant under classical and virtual
Reidemeister moves.
\end{proof}

Quivers are categories with vertices as objects and directed paths as morphisms.
Thus, this weighted quiver categorifies the biquandle arrow weight polynomial,
which we can now regard as a decategorification by summing over the vertices
terms of the form $u^{w(v)}$ where $w(v)$ is the weight at each vertex.

We additionally define two standard polynomial decategorifications,
yielding new polynomial invariants of classical and virtual knots.

\begin{definition}
Let $X$ be a finite biquandle, $S\subset \mathrm{Hom}(X,X)$ a set of 
biquandle endomorphisms, $A$ an abelian group and $W$ a biquandle arrow
weight with $A$ coefficients. Then for any oriented classical or virtual 
knot or link $K$ represented by a Gauss diagram $D$, we define 
\begin{itemize}
\item \textit{The arrow weight in-degree polynomial} to be the sum, over all
vertices $v$ in the vertex set $V$ of the quiver, 
of products of a formal variable $u$ to the power of the vertex weight 
$w(v)=\Sigma_D$ at the vertex $v$ times a formal variable $w$ to the power
of the in-degree of the vertex $v$ (the number of arrows directed in to $v$),  
\[\Phi_{X,W,S}^{\mathrm{deg}_+}(K)=\sum_{v\in V} u^{w(v)}w^{\deg_+(v)}\]
and we further define
\item \textit{The arrow weight two-variable polynomial} to be the
sum, over all edges $e$ in the edge set $E$ of the quiver, of products of 
a formal variable $s$ to the power of the vertex weight at the source 
vertex $S(e)$ times a formal variable $t$ to the power of the vertex weight
at the target vertex $T(e)$,
\[\Phi_{X,W,S}^{2}(K)=\sum_{e\in E} s^{S(e)}t^{T(e)}.\]
\end{itemize}
\end{definition}

\begin{proposition}
The polynomials $\Phi_{X,W,S}^{\mathrm{deg}_+}(K)$ and $\Phi_{X,W,S}^{2}(K)$ are 
invariants of oriented classical and virtual knots.
\end{proposition}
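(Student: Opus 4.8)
The plan is to show that both polynomials depend only on the isomorphism class of the weighted quiver $\mathcal{Q}_{X,W,S}(K)$, which is already known to be an invariant by the previous proposition. The key observation is that each polynomial is defined purely in terms of quiver-theoretic data: the set of vertices, the vertex weights $w(v)\in A$, the edge set $E$, and the incidence of edges at vertices. None of these definitions refers to the particular Gauss diagram $D$ chosen to compute the quiver; they are computed from the abstract labeled directed graph.

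First I would make precise the notion of isomorphism of weighted quivers: a bijection of vertex sets that preserves directed adjacency (sending edges to edges respecting source and target) and preserves the $A$-valued weight at each vertex. Then I would invoke the preceding proposition to assert that for any two Gauss diagrams $D$ and $D'$ representing the same knot $K$, the weighted quivers $\mathcal{Q}_{X,W,S}(K)$ computed from $D$ and $D'$ are isomorphic as weighted quivers. This is the substantive input, and it is supplied.

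Next I would verify that each decategorification is invariant under such an isomorphism. For the in-degree polynomial $\Phi_{X,W,S}^{\mathrm{deg}_+}(K)=\sum_{v\in V}u^{w(v)}w^{\deg_+(v)}$, a weighted-quiver isomorphism matches each vertex $v$ with a vertex $v'$ having the same weight $w(v')=w(v)$, and since it preserves edge incidence it preserves in-degree, $\deg_+(v')=\deg_+(v)$. Hence the summands biject term-for-term and the total is unchanged. For the two-variable polynomial $\Phi_{X,W,S}^{2}(K)=\sum_{e\in E}s^{S(e)}t^{T(e)}$, the isomorphism induces a bijection on edges sending each $e$ to an edge $e'$ with source and target weights equal to those of $e$; thus each monomial $s^{S(e)}t^{T(e)}$ is carried to an identical monomial, and again the sums agree.

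The only genuinely delicate point is a notational one rather than a mathematical obstacle: in the two-variable polynomial the exponents $S(e)$ and $T(e)$ must be read as the \emph{weights} of the source and target vertices, $w(S(e))$ and $w(T(e))$, elements of $A$, so that the argument about matching weights under isomorphism applies verbatim. Once this reading is fixed, there is no remaining obstacle, since every datum entering the two polynomials is manifestly an isomorphism invariant of the weighted quiver. I would close by noting that this is exactly parallel to the decategorification arguments in \cite{CN,CN2}, so no new techniques are required.
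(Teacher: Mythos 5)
Your proposal is correct and follows exactly the paper's own argument: the paper proves this proposition in one line by observing that both polynomials are determined by the weighted quiver $\mathcal{Q}_{X,W,S}(K)$, which is invariant by the preceding proposition. Your write-up simply makes explicit the details (the notion of weighted-quiver isomorphism, the term-by-term matching of monomials, and the reading of $S(e)$, $T(e)$ as source and target vertex \emph{weights}) that the paper leaves implicit.
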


\begin{proof}
These are determined by the biquandle arrow weighted quiver, which is 
invariant.
\end{proof}

We can also make use of the weighting of the vertices to define a new invariant
quiver-valued invariant from which we can extract new polynomial invariants
as decategorifications. 

\begin{definition}
Let $X$ be a biquandle, $W$ a biquandle arrow weight with coefficients in
an abelian group $A$, and $S\subset \mathrm{Hom}(X,X)$ a set of biquandle
endomorphisms. Then let us form the quotient $\mathcal{Q}^Q_{X,W,S}(K)$ 
of the weighted quiver $\mathcal{Q}_{X,W,S}(K)$ by identifying the sets of
vertices which share a common weight. We will call this the \textit{arrow 
weighted quotient quiver}.
\end{definition}

\begin{proposition}
The arrow weighted quotient quiver $\mathcal{Q}^Q_{X,W,S}(K)$ is an invariant
of classical and virtual knots.
\end{proposition}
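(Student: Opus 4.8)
The plan is to deduce invariance of the quotient quiver directly from the already-established invariance of the weighted quiver $\mathcal{Q}_{X,W,S}(K)$, using the fact that forming the quotient is a canonical construction that respects weighted-quiver isomorphisms. First I would make precise what invariance of the weighted quiver means: for any two Gauss diagrams $D$ and $D'$ representing the same knot $K$, the preceding proposition gives an isomorphism $f$ from the weighted quiver of $D$ to the weighted quiver of $D'$, that is, a bijection on vertices carrying edges to edges and preserving the vertex weights, so that $w(f(v))=w(v)$ for every vertex $v$.

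Next I would argue that such a weight-preserving isomorphism automatically respects the equivalence relation defining the quotient. Since two vertices $v_1,v_2$ are identified in $\mathcal{Q}^Q_{X,W,S}(K)$ exactly when $w(v_1)=w(v_2)$, and since $f$ preserves weights, $f$ sends each weight class bijectively onto a weight class. Hence $f$ descends to a bijection $\bar f$ on the vertex sets of the two quotient quivers. Because $f$ carries edges to edges, and the edges of a quotient quiver are by definition the images of the original edges under the vertex identification, $\bar f$ carries edges of the first quotient to edges of the second, transporting multiplicities and collapsed loops consistently. Therefore $\bar f$ is an isomorphism of quotient quivers.

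Finally I would conclude that $\mathcal{Q}^Q_{X,W,S}(K)$ is independent of the choice of diagram and hence is an invariant of $K$. The main point to watch is the specification of the edge structure of the quotient, namely whether parallel edges collapsed together are retained with multiplicity and whether edges joining two vertices of equal weight become loops, since the descent of $\bar f$ must be compatible with whatever convention the definition fixes. Once that convention is pinned down, compatibility is immediate because $f$ is an honest quiver isomorphism before passing to the quotient, so no genuine obstacle arises; the entire content of the statement is that quotienting by weight is a well-defined operation on isomorphism classes of weighted quivers.
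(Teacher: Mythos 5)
Your proposal is correct and follows the same route as the paper: the paper's proof simply observes that the weighted quiver $\mathcal{Q}_{X,W,S}(K)$ is already invariant, so the canonically constructed quotient is invariant too. Your write-up just makes explicit what the paper leaves implicit, namely that a weight-preserving quiver isomorphism descends to an isomorphism of the quotient quivers.
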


\begin{proof}
The arrow weighted quiver $\mathcal{Q}_{X,W,S}(K)$ is already unchanged by 
Reidemeister moves before taking the quotient; it follows that the quotient
is also unchanged by Reidemeister moves.
\end{proof}

One easy polynomial invariant we can obtain from $\mathcal{Q}^Q_{X,W,S}(K)$
is defined by letting each vertex contribute its weight and number of loops:

\begin{definition}
Let $X$ be a biquandle, $W$ a biquandle arrow weight with coefficients in
an abelian group $A$, and $S\subset \mathrm{Hom}(X,X)$ a set of biquandle
endomorphisms. Let $\mathcal{Q}^Q_{X,W,S}(K)$ be the resulting arrow weighted
quotient quiver, and let its vertex set be $V$. Then the \textit{arrow weighted
quotient quiver loop polynomial} is
\[\Phi_{X,W,S}^{Q,L}(K)=\sum_{v\in V} L(v)x^{W(v)}\]
where $W(v)$ is the biquandle arrow weight at the vertex $v$ and $L(v)$ is 
the number of loops in the quiver based at $v$.
\end{definition}

\begin{corollary}
The arrow weighted quotient quiver loop polynomial $\Phi_{X,W,S}^{Q,L}(K)$ 
is an invariant of classical and virtual knots.
\end{corollary}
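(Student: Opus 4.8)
The plan is to deduce this directly from the invariance of the arrow weighted quotient quiver $\mathcal{Q}^Q_{X,W,S}(K)$ established in the preceding proposition. The essential point is that the loop polynomial $\Phi_{X,W,S}^{Q,L}(K)$ is nothing more than a numerical readout of the quotient quiver: it is determined entirely by the vertex set $V$ of $\mathcal{Q}^Q_{X,W,S}(K)$, the biquandle arrow weight $W(v)$ carried by each vertex $v$, and the loop count $L(v)$ based at each vertex. No additional structure of the underlying Gauss diagram $D$ enters the definition, so the corollary should reduce to checking that these data are preserved by whatever equivalence the invariance of the quotient quiver supplies.

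First I would recall precisely what the invariance of $\mathcal{Q}^Q_{X,W,S}(K)$ asserts: given two Gauss diagrams $D$ and $D'$ representing the same oriented knot $K$, the resulting weighted quotient quivers are isomorphic as weighted directed graphs, via an isomorphism that matches each vertex to a vertex carrying the same weight in $A$ and that carries edges to edges. Next I would observe that such an isomorphism necessarily preserves both pieces of data appearing in the summand $L(v)x^{W(v)}$: the weight $W(v)$ is preserved because the isomorphism respects the vertex weights, and the loop count $L(v)$ is preserved because loops, being edges with equal source and target, are sent to loops under any directed-graph isomorphism.

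It then follows that the polynomials computed from $D$ and $D'$ agree term by term, since the bijection on vertices furnished by the isomorphism pairs up summands with equal $W(v)$ and equal $L(v)$. Hence $\Phi_{X,W,S}^{Q,L}(K)$ is independent of the choice of diagram representing $K$, and is therefore an invariant of oriented classical and virtual knots, completing the argument.

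I expect no genuine obstacle here, since this is a decategorification of an already-invariant structure in exactly the manner of the earlier propositions. The one step meriting a moment's care is confirming that the quotient-quiver isomorphism guaranteed by the previous proposition really does preserve loop counts; this is immediate once one notes that being a loop is an isomorphism-invariant feature of a directed graph, so the verification is essentially formal.
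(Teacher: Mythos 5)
Your proposal is correct and matches the paper's (implicit) argument: the corollary is stated without a separate proof precisely because, as you argue, the loop polynomial is determined entirely by the arrow weighted quotient quiver, whose invariance was established in the preceding proposition. Your additional verification that a weighted quiver isomorphism preserves vertex weights and loop counts is exactly the routine check the paper leaves to the reader.
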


\section{\Large\textbf{Examples}}\label{E}

In this section we compute some examples of biquandle arrow weight quivers 
and their decategorification polynomial invariants. We remark that these
are toy examples in the sense that the true power of this infinite family
of invariants lies in using larger biquandles and coefficient groups than 
we are currently able to compute.

\begin{example}
Let $X$, $A$ and $W$ be the biquandle and biquandle arrow weight from Example
\ref{Ex1}. Then the virtual knot $2.1$ has biquandle arrow weight quiver
\[\includegraphics{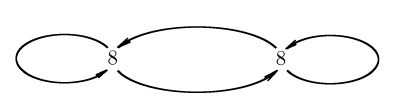}\]
with respect to the full endomorphism set $S=\{[1,2],[2,1]\}$ where
we specify an endomorphism $\sigma:X\to X$ where $X=\{x_1,\dots, x_n\}$
with a vector $\sigma=[\sigma(x_1),\dots,\sigma(x_n)]$ of the images of 
elements of $X$. 
\end{example}

\begin{example}
Let $X$ be the biquandle given by the operation tables
\[
\begin{array}{r|rrr}
\utr & 1 & 2 & 3 \\ \hline
1 & 3 & 3 & 3 \\
2 & 1 & 1 & 1 \\
3 & 2 & 2 & 2
\end{array}
\quad
\begin{array}{r|rrr}
\otr & 1 & 2 & 3  \\ \hline
1 & 3 & 3 & 3 \\
2 & 1 & 1 & 1 \\
3 & 2 & 2 & 2
\end{array}
\]
with endomorphism set $S=\{[3\ 1\ 2],[1\ 2\ 3],[2\ 3\ 1]\}$. 
We then compute that the 4-tensor
\[\left[\begin{array}{ccc}
\left[\begin{array}{rrr}
0 & 1 & 3 \\ 3 & 0 & 1 \\ 1 & 3 & 0
\end{array}\right] & \left[\begin{array}{rrr}
1 & 0 & 4 \\ 4 & 1 & 0 \\ 0 & 4 & 1
\end{array}\right] & \left[\begin{array}{rrr}
3 & 4 & 0 \\ 0 & 3 & 4 \\ 4 & 0 & 3
\end{array}\right] \\ \left[\begin{array}{rrr}
3 & 4 & 0 \\ 0 & 3 & 4 \\ 4 & 0 & 3
\end{array}\right] & \left[\begin{array}{rrr}
0 & 1 & 3 \\ 3 & 0 & 1 \\ 1 & 3 & 0
\end{array}\right] & \left[\begin{array}{rrr}
1 & 0 & 4 \\ 4 & 1 & 0 \\ 0 & 4 & 1
\end{array}\right] \\ \left[\begin{array}{rrr}
1 & 0 & 4 \\ 4 & 1 & 0  \\ 0 & 4 & 1
\end{array}\right] & \left[\begin{array}{rrr}
3 & 4 & 0 \\ 0 & 3 & 4 \\ 4 & 0 & 3
\end{array}\right] & \left[\begin{array}{rrr}
0 & 1 & 3 \\ 3 & 0 & 1 \\ 1 & 3 & 0 
\end{array}\right]
\end{array}\right]
\]
defines a biquandle arrow weight with coefficients in $A=\mathbb{Z}_8$.

Computing the in-degree polynomials, we obtain the table
\[
\begin{array}{r|l}
\Phi_{X,W,S}^{\mathrm{deg}_+}(K) & K \\ \hline
3w^3 & 3.3, 3.4, 3.5, 3.6, 3.7, 4.1, 4.2, 4.3, 4.6, 4.7, 4.8, 4.12, 4.16, 4.21, 4.25, 4.31, 4.36, 4.43, 4.46, \\ 
&  4.53, 4.55, 4.56, 4.59, 4.65, 4.71, 4.73, 4.75, 4.76, 4.77, 4.85, 4.86, 4.89, 4.90, 4.92, 4.95,\\ 
& 4.96, 4.98, 4.99, 4.100, 4.101, 4.104, 4.105, 4.106, 4.107, 4.108 \\
3uw^3 &4.22, 4.62, 4.83, 4.87, 4.88\\
3u^2w^3 & 4.13, 4.15, 4.17, 4.20, 4.23, 4.29, 4.32, 4.34, 4.38, 4.41, 4.49, 4.51, 4.58, 4.61, 4.67, 4.72 \\
3u^3w^3 & 4.24, 4.26, 4.42, 4.47, 4.80 \\
3u^4w^3 & 2.1, 3.1, 3.2, 4.4, 4.5, 4.9, 4.14, 4.27, 4.30, 4.37, 4.44, 4.48, 4.52, 4.54, 4.60, 4.64, 4.69, 4.74, \\ 
& 4.82, 4.84, 4.91, 4.94, 4.102 \\
3u^5w^3 & 4.66, 4.79, 4.93 \\
3u^6w^3 & 4.10, 4.11, 4.18, 4.19, 4.33, 4.35, 4.40, 4.50, 4.57, 4.63, 4.68, 4.70 \\
3u^7w^3 & 4.28, 4.39, 4.45, 4.78, 4.81, 4.97, 4.103
\end{array}.
\]
Since all of these virtual knots have the same unweighted in-degree 
polynomial, $3w^3$, this example shows that our new construction is a proper
enhancement of the in-degree polynomial.


\end{example}

\begin{example}\label{Ex3}
Let $X$ be the biquandle specified by the operation tables
\[
\begin{array}{r|rrrr}
\utr & 1 & 2 & 3 & 4 \\ \hline
1 & 1 & 3 & 4 & 2 \\
2 & 2 & 4 & 3 & 1 \\
3 & 3 & 1 & 2 & 4 \\
4 & 4 & 2 & 1 & 3
\end{array}
\quad
\begin{array}{r|rrrr}
\otr & 1 & 2 & 3 & 4 \\ \hline
1 & 1 & 1 & 1 & 1 \\
2 & 4 & 4 & 4 & 4 \\
3 & 2 & 2 & 2 & 2 \\
4 & 3 & 3 & 3 & 3
\end{array}.
\]
Let $S=\{[1,4,2,3],[1,1,1,1],[1,3,4,2],[1,2,3,4]\}\subset \mathrm{Hom}(X,X)$ and
let $W$ be the biquandle arrow weight with coefficient group $A=\mathbb{Z}_6$
specified by the 4-tensor
\[
\left[\begin{array}{cccc}
\left[\begin{array}{rrrr}
0 & 3 & 0 & 3 \\ 0 & 3 & 0 & 3 \\ 0 & 3 & 0 & 3 \\ 0 & 3 & 0 & 3  
\end{array}\right] & \left[\begin{array}{rrrr}
3 & 0 & 0 & 3 \\ 3 & 0 & 0 & 3 \\ 3 & 0 & 0 & 3 \\ 3 & 0 & 0 & 3  
\end{array}\right] & \left[\begin{array}{rrrr}
0 & 0 & 0 & 0 \\ 0 & 0 & 0 & 0 \\ 0 & 0 & 0 & 0 \\ 0 & 0 & 0 & 0 
\end{array}\right] & \left[\begin{array}{rrrr}
3 & 3 & 0 & 0 \\ 3 & 3 & 0 & 0 \\ 3 & 3 & 0 & 0 \\ 3 & 3 & 0 & 0
\end{array}\right] \\ 
\left[\begin{array}{rrrr}
0 & 3 & 0 & 3 \\ 0 & 3 & 0 & 3 \\ 0 & 3 & 0 & 3 \\ 0 & 3 & 0 & 3
\end{array}\right] & \left[\begin{array}{rrrr}
3 & 0 & 0 & 3 \\ 3 & 0 & 0 & 3 \\ 3 & 0 & 0 & 3 \\ 3 & 0 & 0 & 3
\end{array}\right] & \left[\begin{array}{rrrr}
0 & 0 & 0 & 0 \\ 0 & 0 & 0 & 0 \\ 0 & 0 & 0 & 0 \\ 0 & 0 & 0 & 0
\end{array}\right] & \left[\begin{array}{rrrr}
3 & 3 & 0 & 0 \\ 3 & 3 & 0 & 0 \\ 3 & 3 & 0 & 0 \\ 3 & 3 & 0 & 0
\end{array}\right] \\
\left[\begin{array}{rrrr}
0 & 3 & 0 & 3 \\ 0 & 3 & 0 & 3 \\ 0 & 3 & 0 & 3 \\ 0 & 3 & 0 & 3
\end{array}\right] & \left[\begin{array}{rrrr}
3 & 0 & 0 & 3 \\ 3 & 0 & 0 & 3 \\ 3 & 0 & 0 & 3 \\ 3 & 0 & 0 & 3
\end{array}\right] & \left[\begin{array}{rrrr}
0 & 0 & 0 & 0 \\ 0 & 0 & 0 & 0 \\ 0 & 0 & 0 & 0 \\ 0 & 0 & 0 & 0
\end{array}\right] & \left[\begin{array}{rrrr}
3 & 3 & 0 & 0 \\ 3 & 3 & 0 & 0 \\ 3 & 3 & 0 & 0 \\ 3 & 3 & 0 & 0
\end{array}\right] \\
\left[\begin{array}{rrrr}
0 & 3 & 0 & 3 \\ 0 & 3 & 0 & 3 \\ 0 & 3 & 0 & 3 \\ 0 & 3 & 0 & 3
\end{array}\right] & \left[\begin{array}{rrrr}
3 & 0 & 0 & 3 \\ 3 & 0 & 0 & 3 \\ 3 & 0 & 0 & 3 \\ 3 & 0 & 0 & 3
\end{array}\right] & \left[\begin{array}{rrrr}
0 & 0 & 0 & 0 \\ 0 & 0 & 0 & 0 \\ 0 & 0 & 0 & 0 \\ 0 & 0 & 0 & 0
\end{array}\right] & \left[\begin{array}{rrrr}
3 & 3 & 0 & 0 \\ 3 & 3 & 0 & 0 \\ 3 & 3 & 0 & 0 \\ 3 & 3 & 0 & 0
\end{array}\right]
\end{array}\right]
\]
Then the virtual knots $3.1$ and $3.3$ have biquandle arrow weight quivers
\[\includegraphics{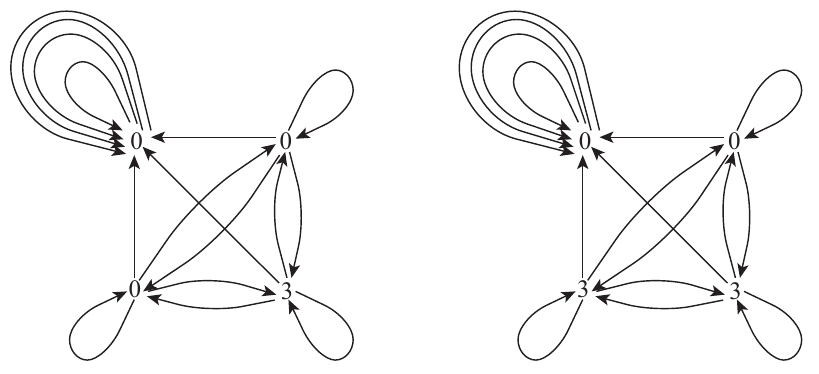} \]
with respect to $X,S$ and $W$. In particular we note that the coloring quivers
without the arrow weights are isomorphic, showing that the biquandle arrow
weight quiver is a proper enhancement of the biquandle coloring quiver.
\end{example}

\begin{example}
Continuing with the virtual knots, biquandle, arrow weight and endomorphism set
from Example \ref{Ex3}, we obtain the quotient quivers
\[\includegraphics{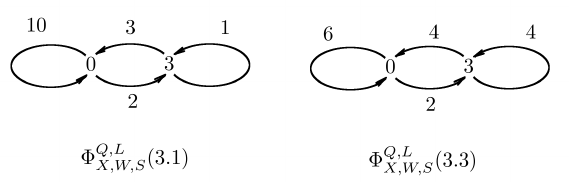}\]
and quotient quiver loop polynomials $\Phi_{X,W,S}^{Q,L}(3.1)= 10x^0+1x^3$
and $\Phi_{X,W,S}^{Q,L}(3.3)= 6x^0+4x^3$.
\end{example}

\begin{example}
We computed the biquandle arrow weight 2-variable
polynomials for a choice of orientation for each of the virtual knots with 
up to 4 classical crossings at \cite{KA} for the biquandle with operation
tables
\[
\begin{array}{r|rrr}
\utr & 1 & 2 & 3 \\ \hline
1 & 3 & 3 & 3 \\
2 & 1 & 1 & 1 \\
3 & 2 & 2 & 3 \\
\end{array}
\quad
\begin{array}{r|rrr}
\otr & 1 & 2 & 3 \\ \hline
1 & 3 & 3 & 3 \\
2 & 1 & 1 & 1 \\
3 & 2 & 2 & 3 \\
\end{array},
\] 
arrow weight over $A=\mathbb{Z}_3$ given by the operation tensor
\[
\left[\begin{array}{ccc}
\left[\begin{array}{rrr} 0 & 1 & 2 \\ 2 & 0 & 1 \\ 1 & 2 & 0\end{array}\right] & 
\left[\begin{array}{rrr} 1 & 0 & 0 \\ 0 & 1 & 0 \\ 0 & 0 & 1\end{array}\right] & 
\left[\begin{array}{rrr} 2 & 0 & 0 \\ 0 & 2 & 0 \\ 0 & 0 & 2\end{array}\right] \\
\left[\begin{array}{rrr} 2 & 0 & 0 \\ 0 & 2 & 0 \\ 0 & 0 & 2\end{array}\right] & 
\left[\begin{array}{rrr} 0 & 1 & 2 \\ 2 & 0 & 1 \\ 1 & 2 & 0\end{array}\right] & 
\left[\begin{array}{rrr} 1 & 0 & 0 \\ 0 & 1 & 0 \\ 0 & 0 & 1\end{array}\right] \\
\left[\begin{array}{rrr} 1 & 0 & 0 \\ 0 & 1 & 0 \\ 0 & 0 & 1\end{array}\right] & 
\left[\begin{array}{rrr} 2 & 0 & 0 \\ 0 & 2 & 0 \\ 0 & 0 & 2\end{array}\right] & 
\left[\begin{array}{rrr} 0 & 1 & 2 \\ 2 & 0 & 1 \\ 1 & 2 & 0\end{array}\right] \\
\end{array}\right]
\]
and endomorphism set 
\[S=\{[3,2,1],[1,2,3],[2,3,1]\}.\] The results are in the table.

\[\begin{array}{r|l}
\Phi_{X,W,S}^{2}(K) & K \\ \hline
9 & 2.1, 3.1, 3.2, 3.3, 3.4, 3.5, 3.6, 3.7, 4.1, 4.2, 4.3, 4.4, 4.5, 4.6, 4.7, 4.8, 4.9, 4.12, 4.14, 4.16, 4.21, \\ 
& 4.25, 4.26, 4.27, 4.28, 4.30, 4.31, 4.36, 4.37, 4.43, 4.44, 4.45, 4.46, 4.47, 4.48, 4.52, 4.53, 4.54,\\ 
&  4.55, 4.56, 4.59, 4.60, 4.64, 4.65, 4.69, 4.71, 4.73, 4.74, 4.75, 4.76, 4.77, 4.80, 4.81, 4.82, 4.83, \\ 
& 4.84, 4.85, 4.86, 4.87, 4.88, 4.89, 4.90, 4.91, 4.92, 4.93, 4.94, 4.95, 4.96, 4.97, 4.98, 4.99, 4.100,\\ 
& 4.101, 4.102, 4.103, 4.104, 4.105, 4.106, 4.107, 4.108 \\
9st & 4.11, 4.17, 4.22, 4.23, 4.32, 4.61, 4.62, 4.63, 4.66, 4.67, 4.68, 4.79 \\
9s^2t^2 & 4.10, 4.13, 4.15, 4.18, 4.19, 4.20, 4.24.4.29, 4.33, 4.34, 4.35, 4.38, 4.39, 4.40, 4.41, 4.42, 4.49, \\ 
& 4.50, 4.51, 4.57, 4.58, 4.70, 4.72, 4.78
\end{array}
\]
\end{example}

\begin{example}
We computed the biquandle arrow weight quotient quiver loop
polynomials for a choice of orientation for each of the virtual knots with 
up to 4 classical crossings at \cite{KA} for the biquandle with operation
table
\[
\begin{array}{r|rrrr}
\utr & 1 & 2 & 3 & 4 \\ \hline
1 & 2 & 2 & 2 & 2 \\
2 & 3 & 3 & 3 & 3 \\
3 & 4 & 4 & 4 & 4 \\
4 & 1 & 1 & 1 & 1
\end{array}
\quad
\begin{array}{r|rrrr}
\otr & 1 & 2 & 3 & 4 \\ \hline
1 & 2 & 4 & 2 & 4 \\
2 & 1 & 3 & 1 & 3 \\
3 & 4 & 2 & 4 & 2 \\
4 & 3 & 1 & 3 & 1
\end{array}
\] 
arrow weight over $A=\mathbb{Z}_4$ given by the tensor
\[
\left[\begin{array}{cccc}
\left[\begin{array}{rrrr}
0 & 0 & 0 & 0 \\ 2 & 2 & 2 & 2 \\ 2 & 2 & 2 & 2 \\ 0 & 0 & 0 & 0  
\end{array}\right] & \left[\begin{array}{rrrr}
0 & 0 & 0 & 0  \\ 2 & 2 & 2 & 2 \\ 2 & 2 & 2 & 2 \\ 0 & 0 & 0 & 0  
\end{array}\right] & \left[\begin{array}{rrrr}
0 & 0 & 0 & 0 \\ 2 & 2 & 2 & 2 \\ 2 & 2 & 2 & 2 \\ 0 & 0 & 0 & 0 
\end{array}\right] & \left[\begin{array}{rrrr}
0 & 0 & 0 & 0 \\ 2 & 2 & 2 & 2 \\ 2 & 2 & 2 & 2 \\ 0 & 0 & 0 & 0 
\end{array}\right] \\ \left[\begin{array}{rrrr}
2 & 2 & 2 & 2 \\ 0 & 0 & 0 & 0 \\ 2 & 2 & 2 & 2 \\ 0 & 0 & 0 & 0  
\end{array}\right] & \left[\begin{array}{rrrr}
2 & 2 & 2 & 2 \\ 0 & 0 & 0 & 0 \\ 2 & 2 & 2 & 2 \\ 0 & 0 & 0 & 0  
\end{array}\right] & \left[\begin{array}{rrrr}
2 & 2 & 2 & 2 \\ 0 & 0 & 0 & 0 \\ 2 & 2 & 2 & 2 \\ 0 & 0 & 0 & 0 
\end{array}\right] & \left[\begin{array}{rrrr}
2 & 2 & 2 & 2 \\ 0 & 0 & 0 & 0 \\ 2 & 2 & 2 & 2 \\ 0 & 0 & 0 & 0 
\end{array}\right] \\ \left[\begin{array}{rrrr}
2 & 2 & 2 & 2 \\ 2 & 2 & 2 & 2 \\ 0 & 0 & 0 & 0 \\ 0 & 0 & 0 & 0 
\end{array}\right] & \left[\begin{array}{rrrr}
2 & 2 & 2 & 2 \\ 2 & 2 & 2 & 2 \\ 0 & 0 & 0 & 0 \\ 0 & 0 & 0 & 0  
\end{array}\right] & \left[\begin{array}{rrrr}
2 & 2 & 2 & 2 \\ 2 & 2 & 2 & 2 \\ 0 & 0 & 0 & 0 \\ 0 & 0 & 0 & 0  
\end{array}\right] & \left[\begin{array}{rrrr}
2 & 2 & 2 & 2 \\ 2 & 2 & 2 & 2 \\ 0 & 0 & 0 & 0 \\ 0 & 0 & 0 & 0
\end{array}\right] \\ \left[\begin{array}{rrrr}
0 & 0 & 0 & 0 \\ 0 & 0 & 0 & 0 \\ 0 & 0 & 0 & 0 \\ 0 & 0 & 0 & 0  
\end{array}\right] & \left[\begin{array}{rrrr}
0 & 0 & 0 & 0 \\ 0 & 0 & 0 & 0 \\ 0 & 0 & 0 & 0 \\ 0 & 0 & 0 & 0  
\end{array}\right] & \left[\begin{array}{rrrr}
0 & 0 & 0 & 0 \\ 0 & 0 & 0 & 0 \\ 0 & 0 & 0 & 0 \\ 0 & 0 & 0 & 0  
\end{array}\right] & \left[\begin{array}{rrrr}
0 & 0 & 0 & 0 \\ 0 & 0 & 0 & 0 \\ 0 & 0 & 0 & 0 \\ 0 & 0 & 0 & 0
\end{array}\right] \end{array}\right] 
\]
and endomorphism set 
\[S=\{[4,1,2,3],[1,2,3,4],[2,3,4,1],[3,4,1,2]\}.\] The results are in 
the table.

\[\begin{array}{r|l}
\Phi_{X,W,S}^{Q,L}(K) & K \\ \hline
4x^2 + 4 & 2.1, 3.2, 3.3, 3.4, 4.3, 4.4, 4.5, 4.6, 4.9, 4.11, 4.13, 4.14, 4.15, 4.17, 4.18, 4.19, 4.20, 4.22, 4.23 \\ 
& 4.24, 4.25, 4.26, 4.27, 4.28, 4.29, 4.30, 4.31, 4.33, 4.34, 4.35, 4.37, 4.38, 4.39, 4.40, 4.42, 4.44, \\ 
& 4.45, 4.46, 4.48, 4.49, 4.51, 4.52, 4.54, 4.57, 4.60, 4.61, 4.62, 4.63, 4.64, 4.66, 4.67, 4.69, 4.74, \\ 
& 4.78, 4.79, 4.81, 4.82, 4.83, 4.84, 4.87, 4.88, 4.92, 4.93, 4.94, 4.95, 4.97, 4.101, 4.103, 4.104 \\
16 & 3.1, 3.6, 4.1, 4.8, 4.10, 4.16, 4.32, 4.41, 4.43, 4.47, 4.50, 4.55, 4.56, 4.58, 4.59, 4.68, 4.71, 4.72, \\ 
& 4.73, 4.75, 4.76, 4.77, 4.80, 4.89, 4.90, 4.96, 4.98, 4.99, 4.102, 4.105, 4.107, 4.108 \\
16x^2 &  3.5, 3.7, 4.2, 4.7, 4.12, 4.21, 4.36, 4.53, 4.65, 4.70, 4.85, 4.86, 4.91, 4.100, 4.106
\end{array}
\]
\end{example}

\section{\Large\textbf{Questions}}\label{Q}

As mentioned in Section \ref{E}, faster search and computation algorithms
for finding biquandle arrow weight quivers with respect to biquandles
with larger cardinality and larger finite or infinite coefficient groups
are needed. Our current custom \texttt{python} code can handle finding
``toy'' examples with biquandles of small cardinality and with
small coefficient groups such as $\mathbb{Z}_n$ for small $n$, but the 
true power of this infinite family of invariants awaits faster and more
efficient algorithms for finding biquandle arrow weights and quivers.

What is the relationship between biquandle arrow weight quivers and (bi)quandle
cocycle quivers as defined in \cite{CN2}? Indeed, what is the algebraic structure
in general of biquandle arrow weights, and how are they related to (bi)quandle
cohomology?

What is the geometric meaning of a biquandle arrow weight and its associated
quivers?

What other decategorifications are possible? What about further enhancements of
the weighted quiver structure? How about functors to other categories?

\bibliographystyle{abbrv}
\bibliography{sam-migiwa}

\bigskip

\noindent
\textsc{Department of Mathematical Sciences \\
Claremont McKenna College \\
850 Columbia Ave. \\
Claremont, CA 91711} 

\

\noindent
\textsc{ \\
Graduate School of Engineering and Science,
College of Engineering,\\
Shibaura Institute of Technology\\
307 Fukasaku, Minuma-ku, Saitama-shi, Saitama, 337-8570, Japan
}

\end{document}